\newtheorem{lem}{Lemma}
\newtheorem{thm}[lem]{Theorem}
\newtheorem{cor}[lem]{Corollary}
\newtheorem{rem}[lem]{Remark}
\newtheorem{defi}[lem]{Definition}
\begin{document}

\title{Component edge connectivity of the folded hypercube
}
\author{ Shuli Zhao, \quad Weihua Yang \footnote{Corresponding author. E-mail: ywh222@163.com,~yangweihua@tyut.edu.cn}\\
\\ \small Department of Mathematics, Taiyuan University of Technology,\\
\small  Taiyuan Shanxi-030024,
China}
\date{}
\maketitle

{\small{\bf Abstract.}\quad  The $g$-component edge connectivity $c\lambda_g(G)$ of a non-complete graph $G$ is the minimum number of edges whose deletion results in a graph with at least $g$ components. In this paper, we determine the component edge connectivity of the folded hypercube $c\lambda_{g+1}(FQ_{n})=(n+1)g-(\sum\limits_{i=0}^{s}t_i2^{t_i-1}+\sum\limits_{i=0}^{s} i\cdot
2^{t_i})$ for $g\leq 2^{[\frac{n+1}2]}$ and $n\geq 5$, where $g$ be a positive integer and $g=\sum\limits_{i=0}^{s}2^{t_i}$ be the
decomposition of $g$ such that $t_0=[\log_{2}{g}],$ and
$t_i=[\log_2({g-\sum\limits_{r=0}^{i-1}2^{t_r}})]$ for $i\geq 1$.

\vskip 0.5cm  Keywords: Component edge connectivity; Folded hypercube; Fault-tolerance;  Conditional connectivity

\section{Introduction}
Let $G$ be a non-complete graph. A $g$-component edge cut of $G$ is a set of edges whose deletion results
in a graph with at least $g$ components. The  $g$-component edge connectivity $c\lambda_{g}(G)$ of a graph $G$ is the size of the smallest  $g$-component edge cut of $G$. By the definition of the $c\lambda_{g}(G)$, it can be seen that $ c\lambda_{g+1}(G)\geq c\lambda_{g}(G)$ for every positive integer $g$.

An interconnection network is usually modeled by a connected graph in which vertices represent processors and edges represent links between processors. The usual edge connectivity  $\lambda(G)$ of  a graph $G$ is the minimum number of edges whose deletion results in a disconnected graph. The edge connectivity is one of the important parameters to evaluate the reliability and fault tolerance of a network. The $g$-component edge connectivity is an extension of the usual edge connectivity $c\lambda_{2}(G)$. The $g$-component connectivity and $g$-component edge connectivity were introduced in \cite{chartrand} and \cite{sampathkumar} independently. In \cite{hsu,zhao}, Hsu et al. and Zhao et al. determined the $g$-component connectivity of the hypercube $Q_{n}$ for $2\leq g\leq n+1$ and $ n+2 \leq g\leq 2n-4$ respectively. As an invariant of the hypercube, the folded hypercube was first proposed by El-Amawy and Latifi \cite{El-Amawy}, is one of the most potential interconnection networks. There are some results about the folded hypercubes \cite{zhang, zhu, zhu2, lai, xu}, so in this paper, we determine the $g$-component edge connectivity of the folded hypercube $FQ_{n}$ for $g\leq 2^{[\frac{n+1}2]}, n\geq 5$.

The $n$-dimensional hypercube $Q_{n}$ is an undirected graph $Q_{n}=(V,E)$ with $|V|=2^{n}$ and $|G|=n\cdot 2^{n-1}.$ Each vertex can be represented by an $n$-bit binary string. There is an edge between two vertices whenever there binary string representation differs in only one bit position. The folded hypercube is an enhancement of the hypercube $Q_{n}$ and $FQ_{n}$ is obtained by adding a perfect matching $M$ on the hypercube, where $M=\{(u,\overline{u})|u\in V(Q_{n})\}$ and $\overline{u}$ represents the complement of the vertex $u,$ that is, all their binary strings are complement and $\overline{0}=1$ and $\overline{1}=0$. One can seen that $E(FQ_{n})=E(Q_{n})\bigcup M.$ For convenience, $FQ_{n}$ can be expressed as $D_{0}\bigotimes D_{1},$ where $D_{0}$ and $D_{1}$ are $(n-1)$-dimensional subcubes induced by the vertices with the $i$-th coordinate $0$ and $1$ respectively. The
3-dimensional and 4-dimensional folded hypercubes are shown in the
following Figure 1 and Figure 2,  respectively.
\begin{center}
\scalebox{0.25}{\includegraphics{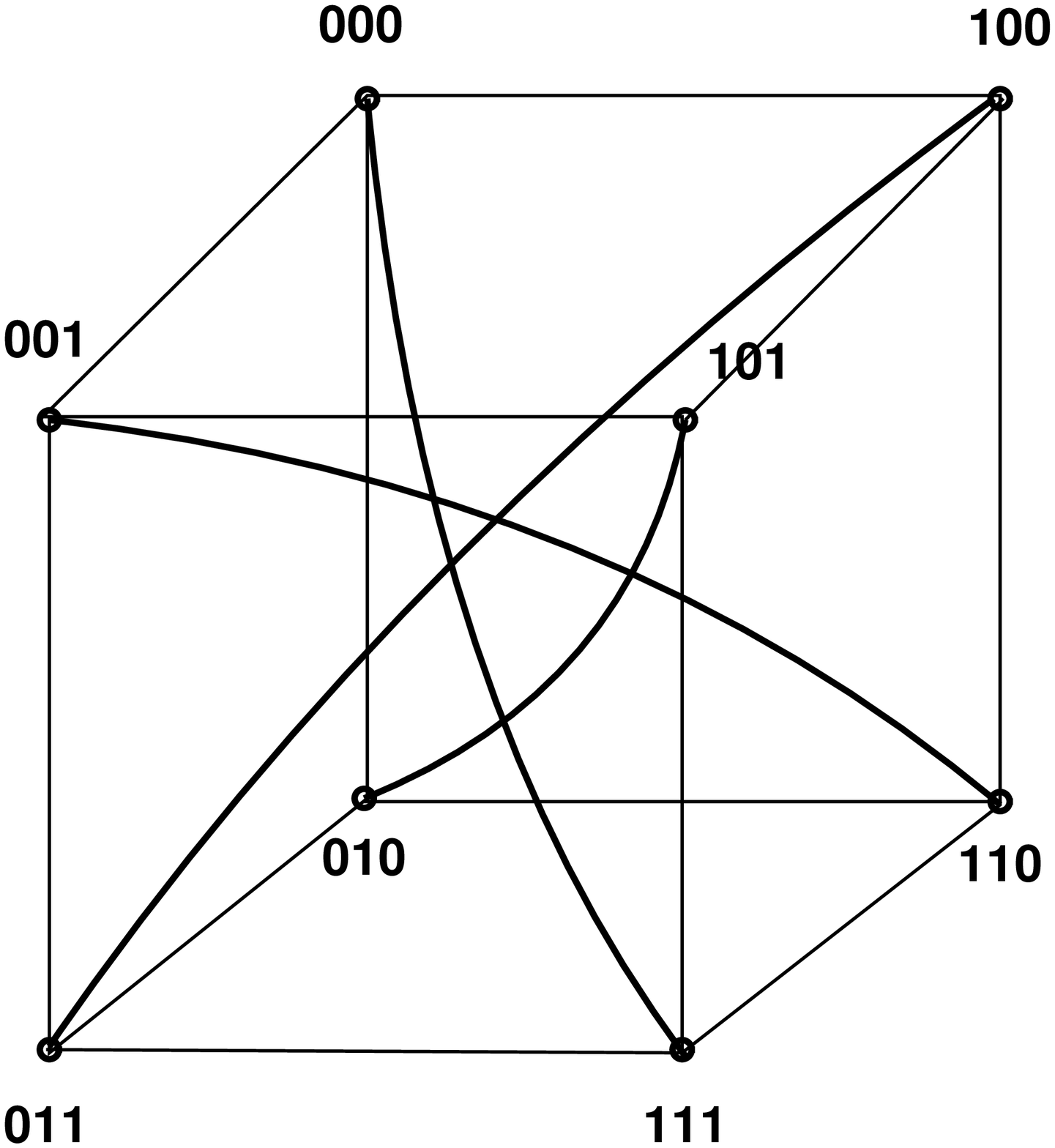}}\\
Figure 1. The 3-dimensional Folded hypercube.
\end{center}

\begin{center}
\scalebox{0.5}{\includegraphics{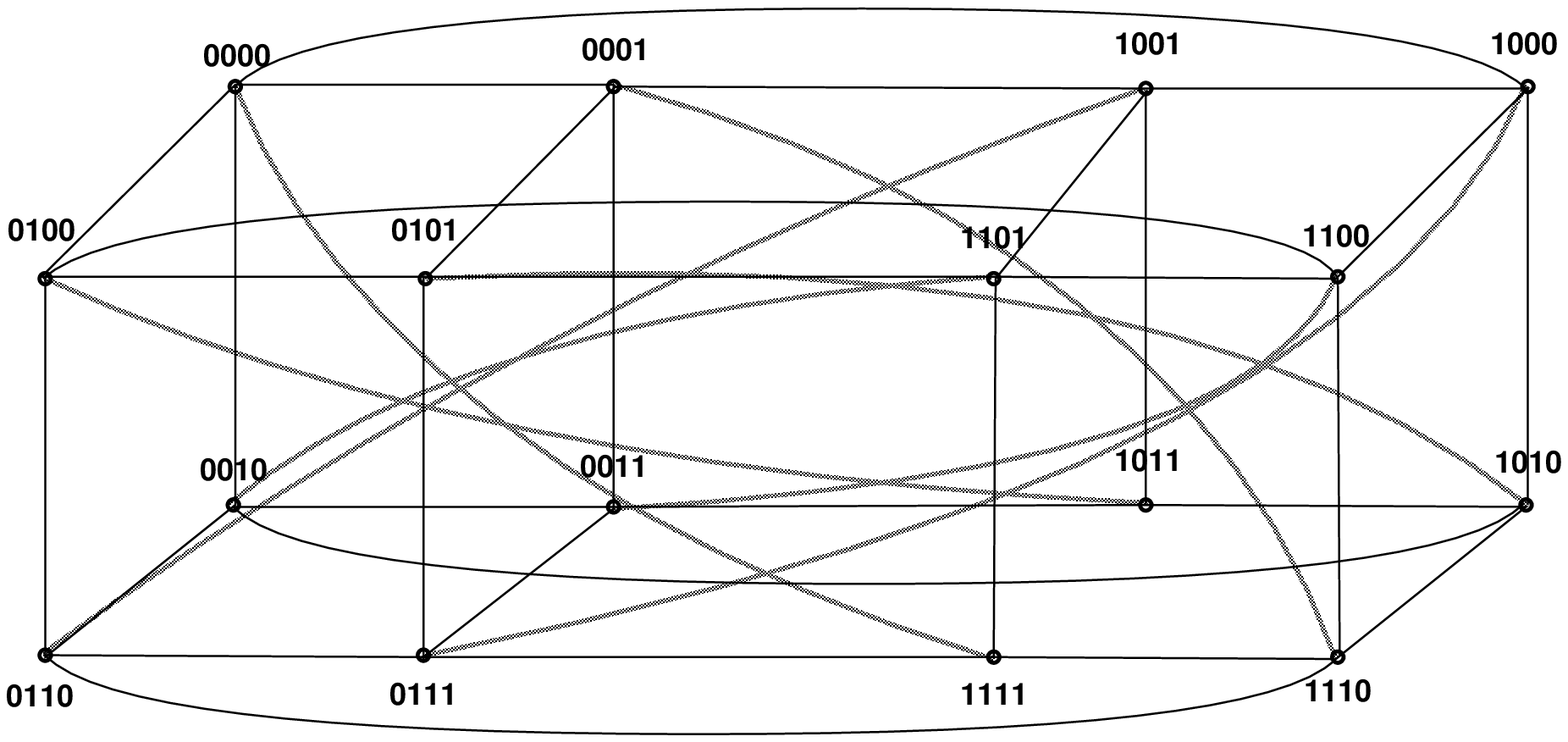}}\\
Figure 2. The 4-dimensional Folded hypercube.
\end{center}

\section{Preliminaries}

Let $m$ be an integer and $m=\sum\limits_{i=0}^{s}2^{t_i}$ be the
decomposition of $m$ such that $t_0=[\log_{2}{m}]$ and
$t_i=[\log_2({m-\sum\limits_{r=0}^{i-1}2^{t_r}})]$ for $i\geq 1$.
Let $X$ be a vertex set of a graph $G$ with $|X|=m$. We denote by
$\frac{ex_m}{2}$ the maximum number of edges of the subgraph of a graph $G$ induced
by $m$ vertices, i.e., $ex_m=\max\{2|E(G[X])|:~ X\subset V(G)
\mbox{ and }|X|=m\}$, is the maximum sum of degrees of
the vertices in the subgraph of a graph $G$ induced by $m$ vertices. In \cite{li}, the authors determined the $ex_m(Q_n)$, which play an important role in this proof.

\begin{thm}\label{thm1}$(\cite{li})$
Let $X$ be a vertex set of $Q_n$ with size $m$. Then
$ex_m(Q_n)=\sum\limits_{i=0}^{s}t_i2^{t_i}+\sum\limits_{i=0}^{s}2\cdot
i\cdot 2^{t_i}$.
\end{thm}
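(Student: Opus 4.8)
The plan is to work with the maximum number of induced edges $f(m):=\tfrac12\,ex_m(Q_n)$, so that the assertion becomes $f(m)=\sum_{i=0}^{s} t_i2^{t_i-1}+\sum_{i=0}^{s} i\cdot 2^{t_i}$ (the relation $ex_m=2|E(Q_n[X])|$ halves the first sum and drops the factor $2$ from the second). First I would set up a recursion by writing $Q_n=D_0\otimes D_1$ as two copies of $Q_{n-1}$ joined by a perfect matching along one coordinate. For $X\subseteq V(Q_n)$ with $|X|=m$, put $X_j=X\cap D_j$ and $a=|X_0|,\ b=|X_1|$ with $a+b=m$. Every edge of $Q_n[X]$ lies inside $D_0$, inside $D_1$, or is one of the matching edges across the split, and the number of matching edges with both ends in $X$ is at most $\min\{a,b\}$. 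Hence $|E(Q_n[X])|\le |E(Q_{n-1}[X_0])|+|E(Q_{n-1}[X_1])|+\min\{a,b\}$, and maximizing over $X$ gives
\[ f(m)=\max_{a+b=m}\bigl(f(a)+f(b)+\min\{a,b\}\bigr), \]
the reverse inequality coming from nesting an optimal $a$-set and an optimal $b$-set into the two halves. In particular the optimum is independent of $n$ once $n$ is large enough to contain an extremal configuration, so it suffices to analyze this one-variable recursion with base cases $f(0)=0$ and $f(2^k)=k2^{k-1}$ (the full $k$-subcube $Q_k$).

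Second, I would solve the recursion by strong induction on $m$. Writing $2^{t_0}\le m<2^{t_0+1}$, I claim the maximizing split is $a=2^{t_0},\ b=m-2^{t_0}$; since then $b<2^{t_0}=a$ we have $\min\{a,b\}=b$, so $f(m)=t_02^{t_0-1}+f(m-2^{t_0})+(m-2^{t_0})$. As $m-2^{t_0}=\sum_{i\ge 1}2^{t_i}$ has binary exponents $t_1>\cdots>t_s$, the induction hypothesis yields $f(m-2^{t_0})=\sum_{i=1}^{s} t_i2^{t_i-1}+\sum_{i=1}^{s}(i-1)2^{t_i}$; substituting and using $m-2^{t_0}=\sum_{i=1}^{s}2^{t_i}$ converts $\sum_{i=1}^s(i-1)2^{t_i}+\sum_{i=1}^s 2^{t_i}$ into $\sum_{i=1}^s i\cdot 2^{t_i}$, and re-indexing reproduces the closed form exactly. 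The same computation has a transparent combinatorial reading that I would record: taking the first $m$ vertices in binary order, vertex $k$ has precisely $s_2(k)$ neighbours among its predecessors (one for each set bit of $k$), so $f(m)=\sum_{k=0}^{m-1}s_2(k)$, where $s_2$ denotes the binary digit sum, and evaluating this summatory function recovers the formula.

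The main obstacle is justifying that $a=2^{t_0}$ is genuinely optimal, i.e. that peeling off a maximal subcube beats every other split. Concretely I must establish $F(a)+F(b)+\min\{a,b\}\le F(m)$ for all $a+b=m$, where $F$ is the claimed closed-form function; together with the achievability above this pins down $f=F$. This inequality is exactly the edge-isoperimetric inequality on the cube (Harper, Bernstein, Hart, Lindsey), and it carries a genuine subtlety: since $f(m)-f(m-1)=s_2(m-1)$ oscillates, $f$ is \emph{not} convex, so a naive balanced-versus-concentrated exchange argument fails and one must instead track which splits respect the binary structure. I would prove the needed inequality by an induction on $m$ carried out jointly with the recursion (or, alternatively, invoke the isoperimetric theorem directly and simply count edges in an initial binary segment). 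Everything else is bookkeeping with the binary decomposition of $m$.
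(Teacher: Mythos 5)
The paper does not actually prove this statement: Theorem 1 is quoted from Li and Yang \cite{li} and used as a black box, so there is no in-paper proof to compare against. Judged on its own, your outline follows the standard route to this classical result and is essentially sound: the splitting bound $|E(Q_n[X])|\le |E(Q_{n-1}[X_0])|+|E(Q_{n-1}[X_1])|+\min\{a,b\}$, achievability by the initial segment $\{0,1,\dots,m-1\}$ (your observation that vertex $k$ has exactly $s_2(k)$ neighbours among its predecessors gives $\sum_{k=0}^{m-1}s_2(k)$ induced edges, and your block-by-block evaluation of that sum reproduces the closed form correctly), and the inequality $F(a)+F(b)+\min\{a,b\}\le F(a+b)$ to close the induction. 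You also correctly locate the real content: that inequality is not routine, precisely because the increments $F(m)-F(m-1)=s_2(m-1)$ oscillate, and it is in substance the edge-isoperimetric theorem of Harper, Lindsey, Bernstein and Hart.

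Two caveats. First, your displayed recursion $f(m)=\max_{a+b=m}\bigl(f(a)+f(b)+\min\{a,b\}\bigr)$ is overstated as an equality: the ``$\ge$'' direction needs an optimal $a$-set and an optimal $b$-set that are \emph{nested} under the matching identification, and you cannot guarantee such nesting until you already know initial segments are optimal --- a circularity. This does not break the argument, because your final reduction only uses the ``$\le$'' direction together with achievability of the closed form $F(m)$ and the inequality $F(a)+F(b)+\min\{a,b\}\le F(m)$; but the write-up should be phrased that way rather than as an exact recursion. Second, of your two proposed completions, only the one that cites the isoperimetric theorem and counts the initial segment is actually complete as stated; the ``joint induction on $m$'' alternative leaves the entire difficulty (the non-convex inequality for $F$) as a plan rather than a proof, and carrying it out amounts to reproving the isoperimetric theorem. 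Since the paper itself settles the statement by citation, closing your argument the same way --- cite Harper/Hart for optimality of initial segments, then perform your digit-sum computation --- is the reasonable and fully rigorous choice.
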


\begin{lem}$(\cite{li})$\label{lem3}
If $m_0\leq m_1$. Then  $ex_{m_0+m_1}(Q_n)\geq ex_{m_0}(Q_n)+ex_{m_1}(Q_n)+2m_0$.
\end{lem}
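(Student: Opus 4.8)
The plan is to prove the inequality by exhibiting a single vertex set of size $m_0+m_1$ whose induced subgraph already realizes the right-hand side. First I would note that the quantity $ex_m(Q_n)$ furnished by Theorem~\ref{thm1} depends only on $m$ and not on the ambient dimension, as long as $m\le 2^n$; hence I am free to work in a hypercube $Q_N$ of any convenient dimension $N$ and to split it as $Q_N=Q_{N-1}^{0}\otimes Q_{N-1}^{1}$, the two $(N-1)$-dimensional subcubes being joined by the perfect matching $M=\{(0,v)(1,v):v\in V(Q_{N-1})\}$ of first-coordinate edges, so that $E(Q_N)=E(Q_{N-1}^{0})\cup E(Q_{N-1}^{1})\cup M$.

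The key structural input I would use is that the edge-isoperimetric optimum is attained by initial segments of a fixed linear (binary) order on the vertices, so that the extremal sets can be chosen nested: for $m_0\le m_1$ there are sets $Y_0\subseteq Y_1\subseteq V(Q_{N-1})$ with $|Y_0|=m_0$, $|Y_1|=m_1$, attaining $ex_{m_0}(Q_{N-1})=ex_{m_0}(Q_n)$ and $ex_{m_1}(Q_{N-1})=ex_{m_1}(Q_n)$ respectively. I then set $X=\{(0,v):v\in Y_1\}\cup\{(1,v):v\in Y_0\}$, which has $m_0+m_1$ vertices, and count the degree sum of $Q_N[X]$ in three blocks: the copy of $Y_1$ inside $Q_{N-1}^{0}$ contributes $ex_{m_1}$, the copy of $Y_0$ inside $Q_{N-1}^{1}$ contributes $ex_{m_0}$, and a matching edge $(0,v)(1,v)$ lies in $X$ precisely when $v\in Y_1\cap Y_0=Y_0$, yielding $m_0$ matching edges and thus $2m_0$ more. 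Summing the three blocks gives $ex_{m_0+m_1}(Q_n)\ge ex_{m_0}(Q_n)+ex_{m_1}(Q_n)+2m_0$; the construction also makes transparent why the hypothesis $m_0\le m_1$ is exactly what guarantees that all $m_0$ twins of $Y_0$ survive inside $Y_1$.

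The main obstacle is justifying the nestedness of the extremal sets, that is, knowing not merely the numerical value $ex_m(Q_n)$ but that there is a single monotone family of optimal sets realizing it for every $m$ simultaneously. I expect to settle this by appealing to the explicit extremal configuration behind Theorem~\ref{thm1}, whose initial segments in the binary order are optimal for all $m$ and are nested by construction; should that structural fact not be directly quotable, the fallback is to build $Y_1$ out of $Y_0$ by adding the vertices dictated by the subcube decomposition $m_1=\sum_i 2^{t_i}$ one block at a time and verifying that optimality is preserved at each step.
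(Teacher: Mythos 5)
Your proof is correct, and in fact there is nothing in the paper to compare it against: the paper states this lemma as a quoted result from \cite{li} and gives no proof of its own. Your argument—embed nested extremal sets $Y_0\subseteq Y_1$ in the two $(N-1)$-dimensional halves of a cube, count $ex_{m_1}+ex_{m_0}$ from the two blocks plus $2m_0$ from the matching edges $(0,v)(1,v)$ with $v\in Y_0$—is essentially the argument of the cited source. The nestedness you single out as the main obstacle is directly quotable from the paper itself: its Lemma 7 states that the initial segments $V(L_m)=\{0,1,\dots,m-1\}$ are composite (extremal) sets for every $m$, and these are nested by construction, so your construction goes through with no gap; the dimension-independence step is likewise justified by the explicit formula of Theorem 1.
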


Before our main results, we need several results focused on the induced subgraph of $FQ_n$ with maximum number of edges \cite{boals,chen,katseff,li,ra}

\begin{defi}$(\cite{boals})$\label{lem3}
 A set of $m$ vertices of $FQ_n$ $(Q_n)$ is said to be a composite set for $m$ if the number of edges of the subgraph induced by these $m$ vertices is not less than the number of edges of subgraphs induced by any other set of $m$ vertices of $FQ_n$ $(Q_n)$. A composite folded hypercube of $FQ_n$ (composite hypercube of $Q_n$) is defined to be a subgraph of $FQ_n$ $(Q_n)$, which is induced by some composite set of $FQ_n$ $(Q_n)$.
\end{defi}

For convenience, the vertex $u=u_{n}u_{n-1}\cdots u_{1}$ of $n$-dimensional folded hypercube also can be represented by decimal number $\sum_{i=1}^{n}u_{i}2^{i-1}$ in this paper.

\begin{defi}$(\cite{katseff,ra})$\label{lem3}
The subgraph induced by vertex set $\{0,1,\cdots ,m-1\}$ (under decimal representation) of $FQ_n$ $(Q_n)$, denoted by $LF_{m}$ $(L_{m})$, is called as an incomplete folded hypercube (incomplete hypercube) on $m$ vertices of $FQ_n$ $(Q_n)$.
\end{defi}

\begin{defi}$(\cite{ra})$\label{lem3}
A reverse incomplete folded hypercube (reverse incomplete hypercube) on $m$ vertices of $FQ_n$ $(Q_n)$ is the subgraph induced by $\{2^{n}-1, 2^{n}-2 \cdots ,2^{n}-m\}$ and is denoted by $RF_{m}$ $(R{m})$, for $1\leq m \leq 2^{n}$.
\end{defi}

\begin{lem}$(\cite{ra})$\label{lem3}
$(L_{m})$ is isomorphic to $R_{m}$ and $(LF_{m})$ is isomorphic to $RF_{m}$, for $1\leq m \leq 2^{n}$.
\end{lem}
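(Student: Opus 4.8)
The plan is to produce a single map that simultaneously witnesses both isomorphisms, namely the bitwise–complement map $\phi$ defined by $\phi(u)=\overline{u}$. First I would record the arithmetic translation of this map: if $u=u_nu_{n-1}\cdots u_1$, then $\overline{u}=(2^n-1)-u$ in decimal, since complementing every bit replaces $\sum_i u_i2^{i-1}$ by $\sum_i(1-u_i)2^{i-1}=(2^n-1)-\sum_i u_i2^{i-1}$. Consequently $\phi$ is an involutive bijection of $V$, and it sends the vertex set $\{0,1,\dots,m-1\}$ of $L_m$ (resp.\ $LF_m$) onto $\{(2^n-1)-k:0\le k\le m-1\}=\{2^n-m,\dots,2^n-2,2^n-1\}$, which is exactly the vertex set of $R_m$ (resp.\ $RF_m$).

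Next I would verify that $\phi$ is an automorphism of $Q_n$. For any two vertices $u,v$, the coordinates in which $\overline{u}$ and $\overline{v}$ differ are exactly those in which $u$ and $v$ differ, because $1-u_i\neq 1-v_i$ holds precisely when $u_i\neq v_i$; hence $\phi$ preserves Hamming distance and in particular adjacency, so $\phi\in\mathrm{Aut}(Q_n)$. To upgrade this to $FQ_n$ I would check that $\phi$ also preserves the added perfect matching $M=\{(u,\overline{u}):u\in V(Q_n)\}$: the image of a matching edge $\{u,\overline{u}\}$ is $\{\phi(u),\phi(\overline{u})\}=\{\overline{u},u\}$, the same edge, so $\phi(M)=M$ and therefore $\phi\in\mathrm{Aut}(FQ_n)$ as well.

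Finally I would invoke the elementary fact that an automorphism restricts to an isomorphism between any two induced subgraphs related by it: for $x,y$ in a vertex subset $A$ one has $xy\in E$ if and only if $\phi(x)\phi(y)\in E$. Taking $G=Q_n$ and $A=\{0,\dots,m-1\}$ yields $L_m=Q_n[A]\cong Q_n[\phi(A)]=R_m$, and taking $G=FQ_n$ with the same $A$ yields $LF_m\cong RF_m$. The whole argument is uniform in $m$ for $1\le m\le 2^n$, so no case analysis is required.

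The step that needs the most care is the verification that $\phi$ is genuinely an automorphism of \emph{both} graphs, i.e.\ that complementation preserves the distance-one adjacency of $Q_n$ and fixes the matching $M$ setwise; once this is in place the two isomorphisms follow immediately, and the only remaining bookkeeping is the decimal identity $\overline{u}=(2^n-1)-u$, which identifies $\phi\bigl(\{0,\dots,m-1\}\bigr)$ with the vertex sets of $R_m$ and $RF_m$.
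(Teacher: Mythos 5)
Your proof is correct and complete. Note that the paper itself offers no proof of this lemma at all: it is quoted as a known result from the reference [Rajasingh--Arockiaraj], so there is nothing internal to compare against. Your argument via the bitwise-complement map $\phi(u)=\overline{u}=(2^n-1)-u$ is exactly the natural way to establish it: the decimal identity correctly identifies $\phi(\{0,\dots,m-1\})$ with $\{2^n-m,\dots,2^n-1\}$, complementation preserves Hamming distance and hence $\mathrm{Aut}(Q_n)$ membership, the matching $M$ is fixed setwise since $\phi(\{u,\overline{u}\})=\{\overline{u},u\}$, and an automorphism always restricts to an isomorphism between the subgraphs induced by a set and by its image. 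One small virtue of your write-up worth keeping is that a single involution witnesses both isomorphisms simultaneously, which makes the uniformity in $m$ (and in the choice of $Q_n$ versus $FQ_n$) completely transparent.
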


\begin{lem}$(\cite{chen,katseff,ra})$\label{lem3}
For $1\leq m \leq 2^{n}$, both $V(LF_{m})$ and $V(RF_{m})$ $((V(L_{m})$ and $V(R_{m}))$ are composite sets of $FQ_n$ $(Q_n)$.
\end{lem}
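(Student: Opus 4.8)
By the isomorphisms $L_m\cong R_m$ and $LF_m\cong RF_m$ recorded above, and since an isomorphism preserves the number of induced edges, it suffices to prove that $V(L_m)$ is a composite set of $Q_n$ and that $V(LF_m)$ is a composite set of $FQ_n$; the statements for $R_m$ and $RF_m$ then follow at once. Throughout, write $e_H(T)$ for the number of edges of $H$ induced by a vertex set $T$. The hypercube assertion---that the initial segment $\{0,1,\dots,m-1\}$ of the binary order maximizes $e_{Q_n}$ among all $m$-sets---is the classical edge-isoperimetric inequality on $Q_n$, which also yields the value $\tfrac12 ex_m(Q_n)$ of Theorem \ref{thm1}; I take this $Q_n$ statement, together with the fact that its extremal sets may be taken to be nested initial segments, as my starting point and devote the argument to the folded case.

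The plan is to reduce $FQ_n$ to the hypercube $Q_{n-1}$ in a single step by splitting $FQ_n=D_0\otimes D_1$ along the top coordinate, where $D_0=\{0,\dots,2^{n-1}-1\}$ and $D_1=\{2^{n-1},\dots,2^{n}-1\}$ each induce a copy of $Q_{n-1}$; note that the folded edges, which complement all $n$ bits, run exclusively between $D_0$ and $D_1$. Given $S$ with $|S|=m$, let $A,B\subseteq Q_{n-1}$ be the projections of $S\cap D_0$ and $S\cap D_1$ onto the low $n-1$ coordinates, with $|A|=a$, $|B|=b$, $a+b=m$. Counting the three kinds of induced edges---internal $Q_{n-1}$ edges in each half, dimension-$n$ cross edges, and folded cross edges---gives
\begin{equation}\label{eq:dec}
e_{FQ_n}(S)=e_{Q_{n-1}}(A)+e_{Q_{n-1}}(B)+|A\cap B|+|A\cap\overline{B}|,
\end{equation}
where $\overline{B}$ is the bitwise complement of $B$ inside $Q_{n-1}$. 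Evaluating the right-hand side of \eqref{eq:dec} at $S=V(LF_m)$ produces the claimed maximum: for $m\le 2^{n-1}$ one has $A=L_m$, $B=\emptyset$ and no cross edges, whereas for $m>2^{n-1}$ one has $A=Q_{n-1}$, $B=L_{m-2^{n-1}}$ and the full $2(m-2^{n-1})$ cross edges.

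It then remains to show that the right-hand side of \eqref{eq:dec} is maximized by the configuration realized by $LF_m$. For a fixed split $(a,b)$ the $Q_{n-1}$-isoperimetric inequality bounds $e_{Q_{n-1}}(A)\le\tfrac12 ex_a(Q_{n-1})$ and $e_{Q_{n-1}}(B)\le\tfrac12 ex_b(Q_{n-1})$, while a short computation shows that the cross term $|A\cap B|+|A\cap\overline{B}|$ is at most $\min(2b,\,b+2p(A),\,2a)$, where $p(A)$ is the number of complementary pairs contained in $A$. Comparing splits is then governed by the super-additivity estimate $ex_{m_0+m_1}(Q_{n-1})\ge ex_{m_0}(Q_{n-1})+ex_{m_1}(Q_{n-1})+2m_0$ (for $m_0\le m_1$), which makes the concentrating choice---fill $D_0$ completely before spilling into $D_1$---optimal. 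I expect the main obstacle to be exactly the appearance of $p(A)$ in the cross-term bound: unlike in $Q_n$, two distinct perfect matchings (the dimension-$n$ matching and the complement matching) join $D_0$ to $D_1$, so the folded cross edges reward sets $A$ containing many complementary pairs, while such sets generally carry fewer internal $Q_{n-1}$ edges. One must therefore show that, over all splits $(a,b)$ and all choices of $A,B$, this trade-off between internal edges and folded cross edges is resolved in favour of the nested initial-segment configuration of $LF_m$; the small cases $n\le 3$ are checked directly to seed the comparison.
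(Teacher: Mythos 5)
The paper itself offers no proof of this lemma: it is imported from the literature (\cite{chen,katseff,ra}; the $Q_n$ half is the classical edge-isoperimetric theorem for hypercubes, the $FQ_n$ half is due to Rajasingh and Arockiaraj \cite{ra}). So your proposal must be judged as a self-contained argument, and as such it has a genuine gap. Your setup is sound: the reduction to $L_m$ and $LF_m$ via the isomorphisms, the decomposition $e_{FQ_n}(S)=e_{Q_{n-1}}(A)+e_{Q_{n-1}}(B)+|A\cap B|+|A\cap\overline{B}|$ (the two cross terms never double-count, since no vertex of $Q_{n-1}$ equals its own complement), the evaluation of this expression at $S=V(LF_m)$, and the cross-term bound $|A\cap B|+|A\cap\overline{B}|\le\min\left(2a,\,2b,\,b+2p(A)\right)$ are all correct. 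But the optimization step --- showing that no split $(a,b)$ and no choice of $A,B$ beats the initial-segment configuration --- is exactly where you stop: the sentence ``one must therefore show that \ldots this trade-off \ldots is resolved in favour of the nested initial-segment configuration'' \emph{is} the lemma; everything before it is bookkeeping.

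Moreover, the one tool you propose for that step, the superadditivity bound $ex_{a+b}(Q_{n-1})\ge ex_a(Q_{n-1})+ex_b(Q_{n-1})+2b$ (for $b\le a$), is provably insufficient: in edge counts it yields a slack of only $b$ between the two halves, while your cross term can reach $2b$. The comparison is genuinely tight, not just hard to close: take $a=b=2^{n-2}$ and $A=B=\{x\in Q_{n-1}: x_1=x_{n-1}\}$, a set closed under complementation. Then the cross term equals $2b=2^{n-1}$, while $A$ splits into two antipodal $(n-3)$-dimensional subcubes with no edges between them, so $e_{Q_{n-1}}(A)=e_{Q_{n-1}}(B)=(n-3)2^{n-3}$; the total is $(n-3)2^{n-2}+2^{n-1}=(n-1)2^{n-2}$, exactly the edge count of $LF_{2^{n-1}}$. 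Thus there are competing configurations in which the loss of internal edges precisely cancels the gain in folded cross edges, and any correct proof must contain a quantitative lemma relating $p(A)$ (the number of complementary pairs inside $A$) to the deficit $\tfrac12 ex_a(Q_{n-1})-e_{Q_{n-1}}(A)$ --- the trade-off you name but never establish. Finally, the remark about checking $n\le 3$ ``to seed the comparison'' is disconnected from the rest of your argument, which is not an induction on $n$: your splitting produces a $Q_{n-1}$ problem with two matchings attached, not a smaller folded-hypercube problem. (A genuine induction is in fact available if one observes that $Q_{n-1}$ together with the complementation matching is precisely $FQ_{n-1}$, so that $e_{Q_{n-1}}(A)+p(A)=e_{FQ_{n-1}}(A)$, but your proposal does not make this observation or exploit it.)
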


\begin{lem}\label{lem3}
\[ex_{m}(FQ_{n})=\left\{
\begin{array}{ll}
          \sum\limits_{i=0}^{s}t_i2^{t_i}+\sum\limits_{i=0}^{s}2\cdot i\cdot 2^{t_i},\hspace{3cm} 1\leq m \leq 2^{n-1}\\
\sum\limits_{i=0}^{s}t_i2^{t_i}+\sum\limits_{i=0}^{s}2\cdot i\cdot 2^{t_i}+m-2^{n-1}, \hspace{1cm} 2^{n-1}\leq m \leq 2^{n} \\
\end{array}
\right.\]
\end{lem}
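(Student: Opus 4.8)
The plan is to avoid optimizing over all $m$-subsets and instead to evaluate $ex_m(FQ_n)$ on one explicit optimal configuration. The linchpin is that the incomplete folded hypercube and the incomplete hypercube share the \emph{same} vertex set $\{0,1,\ldots,m-1\}$ (in decimal representation): by the cited composite-set results this common set is a composite set of $FQ_n$ (as $V(LF_m)$) and simultaneously a composite set of $Q_n$ (as $V(L_m)$). Writing $X=\{0,1,\ldots,m-1\}$, this means that $ex_m(FQ_n)=2|E(FQ_n[X])|$ and $ex_m(Q_n)=2|E(Q_n[X])|$ are both realized on the very same set, which is exactly what lets the two edge types of $FQ_n$ be counted separately without having to compare different maximizers.

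Next I would exploit the disjoint decomposition $E(FQ_n)=E(Q_n)\cup M$, where $M=\{(u,\overline{u})\}$ is the complementary perfect matching, so that restricting to $X$ gives $|E(FQ_n[X])|=|E(Q_n[X])|+|M\cap E(FQ_n[X])|$. By the previous paragraph and Theorem~\ref{thm1}, the $Q_n$-part contributes exactly $\sum_{i=0}^{s}t_i2^{t_i}+\sum_{i=0}^{s}2\cdot i\cdot 2^{t_i}$ to the degree sum, so the whole problem collapses to counting the matching edges lying inside $\{0,\ldots,m-1\}$ and accounting for their degree contribution. Since $\overline{u}=2^n-1-u$ in decimal, the edge $\{u,\overline{u}\}$ lies inside $X$ precisely when $2^n-m\leq u\leq m-1$; this index range is empty for $m\leq 2^{n-1}$ and, because $2^n-1$ is odd (so that $u\neq\overline{u}$ always holds), it consists of exactly $m-2^{n-1}$ complementary pairs for $m\geq 2^{n-1}$. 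Feeding these two counts into the decomposition and adding the degree contribution of these matching edges to $ex_m(Q_n)$ yields the two cases of the formula; the regimes are consistent at the overlap $m=2^{n-1}$, where the matching count is $0$.

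The main obstacle is conceptual rather than computational: everything hinges on the cited fact that $V(LF_m)=\{0,\ldots,m-1\}$ is a composite set of $FQ_n$. Without it, the maximizer for the matching edges and the maximizer for the $Q_n$-edges could in principle be different subsets, and the clean additive split above would be unjustified; the non-trivial input is precisely that a single nested set optimizes both edge classes at once. Granting that result, the remaining steps---separating the edge classes, performing the pairing count, handling the degree-sum bookkeeping, and verifying that the two cases agree at $m=2^{n-1}$---are routine, so I would spend most of the effort making the composite-set reduction and the pair count airtight.
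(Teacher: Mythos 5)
Your strategy is exactly the paper's own: its entire proof of this lemma is ``By Theorem 1 and Lemma 7,'' i.e., the composite-set results reduce the maximization to the single explicit set $\{0,\ldots,m-1\}$, on which the $Q_n$-edges and the matching edges can be counted separately, precisely as you describe. Your reduction and your pair count are both correct: for $m\geq 2^{n-1}$ the edge $\{u,\overline{u}\}=\{u,2^{n}-1-u\}$ lies inside $\{0,\ldots,m-1\}$ exactly when $2^{n}-m\leq u\leq m-1$, which gives $m-2^{n-1}$ matching edges, and none when $m\leq 2^{n-1}$.

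The gap is in your last step, where you assert that ``adding the degree contribution of these matching edges to $ex_m(Q_n)$ yields the two cases of the formula.'' It does not. The paper defines $ex_m$ as the maximum \emph{degree sum}, i.e., twice the maximum number of induced edges, and Theorem \ref{thm1} is stated in those units; consequently the $m-2^{n-1}$ internal matching edges contribute $2(m-2^{n-1})$ to $ex_m(FQ_n)$, not $m-2^{n-1}$. A correct execution of your own plan therefore produces
\[
ex_{m}(FQ_{n})=\sum_{i=0}^{s}t_i2^{t_i}+\sum_{i=0}^{s}2\cdot i\cdot 2^{t_i}+2\left(m-2^{n-1}\right),\qquad 2^{n-1}\leq m\leq 2^{n},
\]
which contradicts the lemma as printed rather than proving it. The discrepancy is real, not notational: in $FQ_3\cong K_{4,4}$ with $m=5$, five vertices induce at most $6$ edges, so $ex_5(FQ_3)=12=ex_5(Q_3)+2(5-4)$, whereas the printed formula gives $11$. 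So either you silently conflated an edge count with its degree-sum contribution, or --- as the computation shows --- the second branch of the lemma itself carries a factor-of-$2$ slip; a proof write-up must say which, and cannot claim the argument ``yields'' the printed statement. The slip is harmless downstream, since every later use of this lemma (Lemmas \ref{lem2} and following, and both cases of the main theorem) invokes only the regime $m\leq 2^{n-1}$, where $ex_m(FQ_n)=ex_m(Q_n)$ and your argument is complete.
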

\begin{proof} By Theorem $1$ and Lemma $7$, the results holds.
\end{proof}

For $m \leq 2^{n-1}$, we note that $ex_{m}(FQ_{n})= ex_{m}(Q_{n}).$

 Denote by $E_{X}$ the set of edges in which
each edge contains exactly one end vertex in $X$.
 By the lemma above,  the following lemma holds.
\begin{lem}\label{lem2}
Let $X$ be a subset of $V(FQ_n)$ with $|X|=m$, where $m$ be an positive integer and $m=\sum\limits_{i=0}^{s}2^{t_i}$. Then $E_{X}$ contains
at least $(n+1)|X|-ex_m$ edges. Moreover,  the function $\xi(m)=(n+1)|X|-\frac{ex_m}2$
is strictly increasing (respect to $m$) for $m\leq2^{[\frac{n+1}2]}$.
\end{lem}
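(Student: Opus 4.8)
The plan is to dispatch the two assertions by different means, since the first is an immediate degree count while the second carries the real content. For the first, I would simply sum degrees: the folded hypercube $FQ_n$ is $(n+1)$-regular, because every vertex is incident to its $n$ hypercube neighbours together with its matching partner $\overline{u}$. Summing the degrees of the vertices of $X$ counts each edge of $FQ_n[X]$ twice and each edge of $E_X$ once, so
\[
(n+1)|X| = 2\,|E(FQ_n[X])| + |E_X|.
\]
Since $2\,|E(FQ_n[X])| \le ex_m$ by the definition of $ex_m$, rearranging gives $|E_X| \ge (n+1)|X| - ex_m$, as claimed.

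For the monotonicity, I would reduce everything to the one-step increment. With $\xi(m) = (n+1)m - \tfrac12 ex_m$ we have
\[
\xi(m+1) - \xi(m) = (n+1) - \tfrac12\bigl(ex_{m+1} - ex_m\bigr),
\]
so it suffices to show $\tfrac12(ex_{m+1}-ex_m) < n+1$ whenever $m+1 \le 2^{[\frac{n+1}2]}$. Since $[\frac{n+1}2] \le n-1$ for $n \ge 5$, the entire range lies in $m+1 \le 2^{n-1}$, where $ex_m(FQ_n) = ex_m(Q_n)$; thus the increment may be computed inside $Q_n$.

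The heart of the argument is to evaluate this increment exactly. I would use that the incomplete hypercubes $L_m \subset L_{m+1}$ form a nested family of composite (edge-maximal) sets, so that $\tfrac12 ex_{m+1} = |E(L_{m+1})|$ and $\tfrac12 ex_m = |E(L_m)|$. Consequently $\tfrac12(ex_{m+1}-ex_m)$ is precisely the number of edges joining the newly added vertex $m$ to $\{0,1,\dots,m-1\}$ in $Q_n$. A hypercube neighbour of $m$ is smaller than $m$ exactly when it is obtained by flipping one of the $1$-bits of $m$ to $0$, so this number equals the Hamming weight $w(m)$ of $m$ (the matching partner $2^n-1-m$ exceeds $2^{n-1}>m$, hence contributes nothing). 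As $m < 2^{[\frac{n+1}2]}$, the integer $m$ has at most $[\frac{n+1}2]$ binary digits, whence $w(m) \le [\frac{n+1}2] < n+1$. Therefore
\[
\xi(m+1) - \xi(m) = (n+1) - w(m) \ge (n+1) - \Bigl[\tfrac{n+1}2\Bigr] > 0,
\]
which is the desired strict monotonicity.

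The step I expect to be the main obstacle is exactly this sharp evaluation of $ex_{m+1}-ex_m$. The naive bound, that a single new vertex can meet at most $n+1$ old ones, only yields $\xi(m+1)-\xi(m) \ge 0$ and is useless for strictness, so the identity $\tfrac12(ex_{m+1}-ex_m) = w(m)$ is unavoidable. One could instead try to differentiate the closed form of Theorem \ref{thm1} directly, but passing from $m$ to $m+1$ triggers binary carries that rewrite the decomposition $m = \sum_i 2^{t_i}$, and checking that the formula telescopes cleanly across these carry boundaries is delicate; I would therefore lean on the optimality and nestedness of the incomplete hypercubes rather than on brute-force manipulation of the formula.
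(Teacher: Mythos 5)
Your proof is correct and follows the same skeleton as the paper's: both reduce the monotonicity to the one-step increment $\xi(m+1)-\xi(m)=(n+1)-(s+1)$, where $s+1=w(m)$ is the number of summands in the binary decomposition of $m$, which is positive since $w(m)\le[\frac{n+1}2]<n+1$ in the stated range. The only difference is one of rigor: the paper merely asserts this increment identity (equivalently $ex_{m+1}=ex_m+2(s+1)$, which also appears unproved inside the proof of Lemma 11), whereas you actually derive it from the edge-maximality (Lemma 7) and nestedness of the incomplete hypercubes $L_m\subset L_{m+1}$, together with the reduction $ex_m(FQ_n)=ex_m(Q_n)$ for $m\le 2^{n-1}$ — a clean way to sidestep the binary-carry issues that would arise from manipulating the closed formula of Theorem 1 directly.
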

\begin{proof} By Lemma $8$, the first part is clearly.  As $m\leq2^{[\frac{n+1}2]}$ and $m=\sum\limits_{i=0}^{s}2^{t_i}$, then
$\xi(m+1)-\xi(m)=(n+1)-(s+1)$, which implies $\xi(m)$ is strictly increasing (respect to $m$) for $m\leq2^{[\frac{n+1}2]}$.
\end{proof}

\begin{lem}\label{lem2}
Let $X$ be a subset of $V(FQ_n)$ with $|X|=m$, if $m \leq 2 ^{n-1}$, then $(n-1)m- ex_{m}(FQ_n)\geq 0$.
\end{lem}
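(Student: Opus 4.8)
The plan is to recast the claim as the upper bound $ex_m(FQ_n)\le (n-1)m$ and then exploit the explicit structure of the extremal (composite) set rather than the closed formula. By the remark following Lemma $8$, for $m\le 2^{n-1}$ we have $ex_m(FQ_n)=ex_m(Q_n)$, and by the composite-set result (Lemma $7$) this common value is attained by the incomplete folded hypercube $LF_m$ induced by the vertex set $\{0,1,\dots ,m-1\}$; that is, $ex_m(FQ_n)=\sum_{v\in V(LF_m)}\deg_{LF_m}(v)$, the sum of degrees inside the induced subgraph.

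First I would locate this extremal set inside a single $(n-1)$-dimensional subcube. Writing $FQ_n=D_0\otimes D_1$, the subcube $D_0$ consists of the vertices whose $n$-th coordinate is $0$, equivalently the vertices of decimal value less than $2^{n-1}$. Since $m\le 2^{n-1}$, every vertex of $LF_m$ has decimal value at most $m-1<2^{n-1}$, so $V(LF_m)\subseteq V(D_0)$, and $D_0\cong Q_{n-1}$.

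Next I would argue that every edge internal to $LF_m$ is an ordinary hypercube edge of $D_0$. A folded edge joins $v$ to $\overline v$, and since $v\in D_0$ forces $\overline v\in D_1$, no folded edge has both ends in $V(LF_m)$; the same holds for the hypercube edges crossing between $D_0$ and $D_1$. Hence $LF_m$ is a subgraph of $Q_{n-1}$, so $\deg_{LF_m}(v)\le n-1$ for each of its $m$ vertices, giving $ex_m(FQ_n)=\sum_{v}\deg_{LF_m}(v)\le (n-1)m$, which is exactly $(n-1)m-ex_m(FQ_n)\ge 0$.

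The argument is short because the main work is structural rather than computational; the one point needing care is the claim that the degree-maximizing set may be taken to be $LF_m$ and that it embeds in $D_0$, both of which are supplied by Lemma $7$ and the coordinate description of $D_0$. An alternative purely algebraic route would substitute the formula of Theorem $1$ and try to verify $\sum_{i=0}^{s}(t_i+2i)2^{t_i}\le (n-1)\sum_{i=0}^{s}2^{t_i}$ directly; this fails term by term, since the factors $t_i+2i$ can exceed $n-1$, and would require an Abel-summation bookkeeping that exploits the geometric decay of the weights $2^{t_i}$. I therefore expect the structural argument above to be the cleaner path, with the embedding step being the only genuine obstacle.
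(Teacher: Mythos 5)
Your proof is correct and takes essentially the same approach as the paper: both arguments locate an extremal set attaining $ex_m(FQ_n)$ inside a single $(n-1)$-dimensional subcube $D_i$ (using the composite-set results of Definition 4 and Lemmas 7--8) and then bound the induced degrees by $n-1$. Your write-up simply makes explicit the details the paper leaves implicit, namely that $V(LF_m)\subseteq V(D_0)$ when $m\le 2^{n-1}$ and that no folded or crossing edge can be internal to $LF_m$.
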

\begin{proof} Let $FQ_n =D_{0}\bigotimes D_{1},$ as $m \leq 2 ^{n-1}$, by Definition $4$, Lemma $7$ and $8$, we can take a subgraph $G_{0}$ of a $(n-1)$-dimensional subcube $D_{i}$ (i=0, 1) such that $2|E(G_{0})|=ex_{m}$. Thus, if $m \leq 2 ^{n-1}$, then $(n-1)m- ex_{m}(FQ_n)\geq 0$.
\end{proof}
\begin{lem}\label{lem2}
If $\sum_{i=1}^{r}ex_{m_i}=m$ and $m_i>0$, then  $\sum_{i=1}^{r}ex_{m_i}\leq ex_{m-r+1}$.
\end{lem}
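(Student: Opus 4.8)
The plan is to prove the estimate by induction on the number of parts $r$, reading the hypothesis as a partition $\sum_{i=1}^{r}m_i=m$ of $m$ into $r$ positive integers (I take the ``$\sum ex_{m_i}=m$'' in the statement to be a typo for $\sum m_i=m$, since otherwise the conclusion would merely restate the hypothesis). The entire argument reduces to a single two-part merging inequality, so I would isolate and prove that first, after which the inductive reduction from $r$ to $r-1$ parts is immediate.

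The key step is the two-part inequality $ex_a+ex_b\leq ex_{a+b-1}$ for all integers $a,b\geq 1$. Assuming $a\leq b$ (so $a-1\leq b$), I would apply the superadditivity of Lemma $2$ with smaller part $a-1$ and larger part $b$:
\[ ex_{a+b-1}=ex_{(a-1)+b}\geq ex_{a-1}+ex_b+2(a-1). \]
It therefore suffices to establish the marginal bound $ex_a-ex_{a-1}\leq 2(a-1)$. For this I would use that composite sets are realized by the \emph{nested} incomplete hypercubes of Definition $4$ and Lemma $7$, with $L_{a-1}\subset L_a$ differing in the single vertex labelled $a-1$; that vertex has at most $a-1$ neighbours among $\{0,1,\dots,a-2\}$, and each such edge contributes $2$ to the sum of degrees, so $ex_a-ex_{a-1}\leq 2(a-1)$. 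Chaining the two facts gives $ex_{a+b-1}\geq ex_{a-1}+ex_b+(ex_a-ex_{a-1})=ex_a+ex_b$, as required.

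For the induction itself, the base case $r=1$ is the identity $ex_{m_1}\leq ex_{m_1}$. For $r\geq 2$ I would merge two of the parts, say $m_{r-1}$ and $m_r$, replacing them via the two-part inequality by the single positive part $m_{r-1}+m_r-1$. This leaves $r-1$ positive parts summing to $m-1$, so the induction hypothesis yields $\sum ex\leq ex_{(m-1)-(r-1)+1}=ex_{m-r+1}$; combining this with the merging inequality closes the step.

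I expect the only genuine obstacle to be the marginal bound $ex_a-ex_{a-1}\leq 2(a-1)$, everything else being bookkeeping: this is the point at which one must invoke the concrete structure of composite sets as nested incomplete hypercubes rather than the closed formula for $ex_m$ alone. A secondary technical point is that Lemma $2$ is stated for $Q_n$, so I would note that throughout the working range $m\leq 2^{n-1}$ one has $ex_m(FQ_n)=ex_m(Q_n)$, which lets me apply the superadditivity directly to the folded hypercube.
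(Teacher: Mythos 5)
Your proof is correct and follows essentially the same route as the paper: both arguments reduce the claim to the two-part merging inequality $ex_{a}+ex_{b}\leq ex_{a+b-1}$, obtain it from the superadditivity of Lemma 2 applied to the pair $(a-1,b)$, and then iterate over the parts (your formal induction is just the paper's chain $ex_{m_1}+ex_{m_2}+\cdots+ex_{m_r}\leq ex_{m_1+m_2-1}+ex_{m_3}+\cdots+ex_{m_r}\leq\cdots\leq ex_{m-r+1}$ made explicit). The only difference is cosmetic: where you bound the one-vertex increment by the counting estimate $ex_{a}-ex_{a-1}\leq 2(a-1)$ via the nested composite sets $L_{a-1}\subset L_{a}$, the paper uses the exact increment $ex_{k+1}=ex_{k}+2(s+1)$ (with $s+1$ the number of powers of $2$ in the decomposition of $k$) together with $s+1\leq k$ --- the same inequality, justified from the closed formula rather than by counting, and your explicit remark that $ex_{m}(FQ_n)=ex_{m}(Q_n)$ in the relevant range is a point the paper leaves implicit.
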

\begin{proof}
To prove the result, we just need to prove that when $k\leq l$, $ex_{k+1}+ex_l\leq ex_{k+l}$.

Note that $ex_{k+1}=ex_k+2(s+1)$, where $k=2^{t_0}+\cdots+2^{t_s}$. By Lemma $2$, $ex_{k+l}\geq ex_k+ex_l+2k=ex_{k+1}+ex_l+2k-2(s+1)\geq ex_{k+1}+ex_l$. By using the inequality, it can be seen that $ex_{m_1}+ex_{m_2}+\cdots + ex_{m_r}\leq ex_{m_1+m_2-1}+ex_{m_3}+\cdots+ex_{m_r}\leq \cdots\leq ex_{m-r+1}$.

\end{proof}

\section{Main Results}

In this section, we determine the $c\lambda_g(FQ_n)$.

\begin{thm}\label{thm2}
 $c\lambda_{g+1}(FQ_n)=(n+1)g-\frac{ex_g}2$ for $g\leq 2^{[\frac{n+1}2]}, n\geq 5$.
\end{thm}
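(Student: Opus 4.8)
The plan is to prove matching upper and lower bounds, both equal to $\xi(g)=(n+1)g-\frac{ex_g}{2}$. For the upper bound I would simply exhibit a $(g+1)$-component edge cut of that size. Take a composite set $X$ of $g$ vertices (for instance $V(LF_g)$, which is composite by Lemma $7$) and delete every edge incident with $X$. Since $FQ_n$ is $(n+1)$-regular, the number of such edges is $|E(FQ_n[X])|+|E_X|=\frac{ex_g}{2}+\bigl((n+1)g-ex_g\bigr)=(n+1)g-\frac{ex_g}{2}$, where I used that for a composite set $2|E(FQ_n[X])|=ex_g$ and hence $|E_X|=(n+1)g-ex_g$. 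Deleting these edges isolates each of the $g$ vertices of $X$ and leaves the nonempty set $V(FQ_n)\setminus X$, so it produces at least $g+1$ components. This gives $c\lambda_{g+1}(FQ_n)\le (n+1)g-\frac{ex_g}{2}$.

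For the reverse inequality, let $F$ be any $(g+1)$-component edge cut, so that $FQ_n-F$ has components $C_1,\dots,C_k$ with $k\ge g+1$ and $|V(C_i)|=n_i$. Counting each cut edge from both of its endpoints gives $2|F|=\sum_{i=1}^{k}|E_{V(C_i)}|$, and since $|E_{V(C_i)}|=(n+1)n_i-2|E(FQ_n[C_i])|\ge (n+1)n_i-ex_{n_i}$, I obtain $2|F|\ge (n+1)2^n-\sum_{i=1}^{k}ex_{n_i}$. Now the superadditivity Lemma $11$, applied with $\sum_i n_i=2^n$ and $r=k$ parts, yields $\sum_{i=1}^{k}ex_{n_i}\le ex_{2^n-k+1}\le ex_{2^n-g}$, where the last step uses $k\ge g+1$ together with the monotonicity of $ex$ (so that increasing the number of components only helps). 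Thus $2|F|\ge (n+1)2^n-ex_{2^n-g}$.

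It then remains to identify $(n+1)2^n-ex_{2^n-g}$ with $2\xi(g)=2(n+1)g-ex_g$, and I would do this through a complementary-set identity. The set $V(LF_g)$ and its complement $V(RF_{2^n-g})$ are both composite (Lemmas $5$ and $7$) and partition $V(FQ_n)$; counting $E(FQ_n)$ as the internal edges of the two parts plus the edges between them gives $(n+1)2^{n-1}=\frac{ex_g}{2}+\frac{ex_{2^n-g}}{2}+\bigl((n+1)g-ex_g\bigr)$, i.e. $ex_{2^n-g}-ex_g=(n+1)(2^n-2g)$. Substituting this back gives $2|F|\ge 2(n+1)g-ex_g$, hence $|F|\ge (n+1)g-\frac{ex_g}{2}$, which combined with the upper bound finishes the theorem.

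I expect the main obstacle to be the lower bound, and specifically the clean application of Lemma $11$ to the component sizes $n_i$, which may be large (in particular the size $2^n-k+1$ lies well above $2^{n-1}$, so the $FQ_n$ form of $ex$ must be used there), together with the verification of the complementary identity. The hypothesis $g\le 2^{[\frac{n+1}{2}]}$ with $n\ge 5$ plays the supporting role of ensuring $g\le 2^{n-1}$, so that $ex_g(FQ_n)=ex_g(Q_n)$ and $\frac{ex_g}{2}$ coincides with the explicit closed form in the statement; it is also the range in which Lemma $9$ guarantees the strict monotonicity of $\xi$, certifying that $\xi(g)$ is genuinely the minimum. A secondary point to check is that allowing $k>g+1$ does not weaken the estimate, which is exactly what the step $ex_{2^n-k+1}\le ex_{2^n-g}$ handles.
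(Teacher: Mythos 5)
Your upper bound is the same construction as the paper's (isolate a composite $g$-set), so the interest lies entirely in your lower bound, which takes a genuinely different route. The paper splits on the size of the largest component: when every component has fewer than $2^{n-2}$ vertices it uses an isoperimetric estimate whose output $\frac{3(n+1)}{4}\cdot 2^{\frac{n+1}{2}}$ equals $(n+1)g-\frac{ex_g}{2}$ exactly at $g=2^{\frac{n+1}{2}}$ --- this case is precisely where the hypothesis $g\leq 2^{[\frac{n+1}{2}]}$ is forced; when a giant component exists, it counts only the edges leaving the $g$ small components (total size $m<2^{n-2}$), applies Lemma 11 in that small range, and then needs a delicate structural comparison (the $f_5\geq f_6$ argument with the figure) to pass from $m(n+1)-\frac{ex_m}{2}-\frac{ex_{m-g+1}}{2}$ to $(n+1)g-\frac{ex_g}{2}$. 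You instead make one global count over all $k\geq g+1$ components, giant included, and close with the complementary-set identity $ex_{2^n-g}-ex_g=(n+1)(2^n-2g)$, correctly derived from the fact that $V(LF_g)$ and its complement $V(RF_{2^n-g})$ are both composite (Lemma 7). This avoids the case analysis and the $f_5\geq f_6$ step entirely, and the hypothesis $g\leq 2^{[\frac{n+1}{2}]}$ plays no role in your lower bound (your Lemma 9 remark is also unneeded: handling $k>g+1$ by monotonicity of $ex$ already does that work). One cosmetic point: $2|F|=\sum_i|E_{V(C_i)}|$ holds only for a minimal cut; in general write $2|F|\geq\sum_i|E_{V(C_i)}|$, which is all you use.

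The genuine gap is the step $\sum_{i=1}^{k}ex_{n_i}\leq ex_{2^n-k+1}$, which you flag but do not close, and which cannot simply be cited. The paper's Lemma 11 is proved from Lemma 2, a statement about $ex_m(Q_n)$, and the paper only ever invokes it for sizes below $2^{n-2}$, where $ex_m(FQ_n)=ex_m(Q_n)$; your application involves a component of size up to $2^n-g>2^{n-1}$, where the folded function carries the extra matching term and superadditivity must be re-proved. Worse, the paper's Lemma 8 cannot be taken at face value there: its formula $ex_m(Q_n)+m-2^{n-1}$ is inconsistent with Lemma 7, since the composite set $V(LF_m)$ contains exactly $m-2^{n-1}$ antipodal pairs, giving $ex_m(FQ_n)=ex_m(Q_n)+2(m-2^{n-1})$ (the stated formula even makes $\frac{ex_m}{2}$ non-integral when $m$ is odd). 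So a complete proof must (i) derive the corrected value of $ex_m(FQ_n)$ for $m\geq 2^{n-1}$ from Lemma 7, and (ii) verify the merging inequality $ex_{a+1}(FQ_n)+ex_b(FQ_n)\leq ex_{a+b}(FQ_n)$, $a\leq b$, when the extra terms are active; this does work --- e.g.\ for $b>2^{n-1}$ the extra terms cancel down to $2(a-\nu(a))+2a\geq 0$, where $\nu(a)$ is the number of ones in the binary expansion of $a$, and the remaining cases are similar --- but it is a computation you must actually carry out. Note that your complementary identity is consistent only with the corrected Lemma 8, which confirms that deriving it from compositeness rather than from the stated formula was the right move. Once these details are supplied, your argument is correct, and it in fact establishes the formula well beyond the stated range of $g$, which the paper leaves open.
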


\begin{proof}For convenience sake, we assume that $n$ is odd.

First, we show that $c\lambda_{g+1}(FQ_n)\leq (n+1)r-\frac{ex_g}2$ by constructing a $(g+1)$-component edge cut $F$ with size $(n+1)g-\frac{ex_g}2$.
Let $G_1$ be the subgraph induced by the vertex set $V=\{0,1,\cdots ,g-1\}$,
where $g\leq 2^{n-1}$ and $g=\sum\limits_{i=0}^{s}2^{t_i}$, then by Definition $4$ Lemma $7$ and $8$, $|E(G_1)|=\sum\limits_{i=0}^{s}t_i2^{t_i}+\sum\limits_{i=0}^{s}2\cdot
i\cdot 2^{t_i}= ex_{g}(FQ_n).$ Let $F=E_{V(G_1)}\cup E(G_1)$, then $F$ is a $(g+1)$-component edge cut with size $(n+1)g-\frac{ex_g}2$.

Next, we show that $c\lambda_{g+1}(FQ_n)\geq (n+1)g-\frac{ex_g}2$. Let $F$ be a $c\lambda_{g+1}$-cut, then we have that $FQ_n-F$ has exactly $g+1$ components. We denote by $C_1,C_2,\cdots,C_{g+1}$ the $g+1$ components in $FQ_n-F$, and $C_{g+1}$ be the largest one. We will prove the result by the following cases:

{Case 1.} $|V(C_{g+1})|<2^{n-2}$

If $|V(C_{g+1})|<2^{n-2}$, then there exists a partition of $\{C_1,C_2,\cdots,C_{g+1}\}$ such that $|\cup_{j=1}^{k}V(G_{i_j})|\geq 2^{n-2}$ and $|\cup_{j=k+1}^{g+1}V(G_{i_j})|\geq 2^{n-2}$. So we just need to show that $|\bigcup_{i=1}^{g+1}E_{V(C_{i})}(FQ_{n})|\geq (n+1)g-\frac{ex_g}2.$

Let $X\subset V(FQ_n)$ and $\overline{X}=V(FQ_n)\setminus X$. If $2^{n-2}\leq |X|\leq |\overline{X}|$ and let $|X|=m$,
then denote by
$m'=m-2^{t_0}$. Clearly, $m'< 2^{n-2}$.  By Lemma $10$, we have the following:
\begin{equation*} \label{eq:1}
\begin{split}
(n+1)m-ex_m& =
(n+1)2^{t_0}+(n+1)(2^{t_1}+\cdots+2^{t_s})-t_02^{t_0-1}-(\sum\limits_{i=1}^{s}t_i2^{t_i}+\sum\limits_{i=1}^{s}2\cdot
i\cdot
2^{t_i})\\
 &\geq (n+1- t_0)2^{t_0}+(n+1)\sum\limits_{i=1}^s2^{t_i}-(\sum\limits_{i=1}^{s}t_i2^{t_i}+\sum\limits_{i=1}^{s}2\cdot
i\cdot 2^{t_i}))\\
&=(n+1- t_0)2^{t_0}+(n+1)r'-ex_{r'}-2r'\\
&=(n+1- t_0)2^{t_0}+(n-1)r'-ex_{r'}\\
&\ge (n+1- t_0)2^{t_0}\\
&=(n+1-t_0)\cdot2^{(t_0-\frac{n+1}2)}\cdot2^{\frac{n+1}2}
 \end{split}
 \end{equation*}
Note that $n-2\leq t_0\leq n-1$, then we have
$(n+1-t_0)\cdot2^{(t_0-\frac{n+1}2)}\geq \frac{3(n+1)}4$ for $n\geq 7$. So one can see that $|F|\geq (n+1)\cdot2^{\frac{n+1}2}-\frac{n+1}{4}\times 2^{\frac{n+1}2}=\frac{3(n+1)}{4}\times 2^{\frac{n+1}2} \geq (n+1)g-\frac{ex_g}2 $ when $|V(C_{g+1})|<2^{n-2}$ .

{Case 2.} $|V(C_{g+1})|\geq 2^{n-2}$

Let $|V(C_i)|=m_i$ and $m=\sum_{i=1}^{g}m_i$. Clearly, we may assume $m< 2^{n-2}$.

 If $m=g$, then $m_i=1$ and thus $|F|\geq (n+1)g-\frac{ex_g}2$. Therefore, we may assume $m>g$. Let $S=\cup_{i=1}^gV(C_i)$ and $F'=F\cap E(G[S])$.
Note that $E_{V(C_i)}\subset F$, then we have $|F|\geq |\cup_{i=1}^g E_{V(C_i)}| = |E_{V(C_1)}|+\cdots + |E_{V(C_g)}|-|F'|$. Since $|E_{V(C_i)}|=m_i\cdot (n+1)-2|E(C_i)|$ and $|F'|\leq \frac{ex_m}2-|\cup_{i=1}^g E(C_i)|$,  we have the following.

\begin{equation*} \label{eq:1}
\begin{split}
|F|&\geq |\cup_{i=1}^g E_{V(C_i)}| = |E_{V(C_1)}|+\cdots + |E_{V(C_g)}|-|F'|\\
&\geq \sum_{i=1}^{g}(m_i\cdot (n+1)-2|E(C_i)|)-\frac{ex_m}2 + |\cup_{i=1}^g E(C_i)|
\\
 &= m(n+1)-2\sum_{i=1}^{g}|E(C_i)|-\frac{ex_m}2 + \sum_{i=1}^{g}|E(C_i)|\\
&=m(n+1)-\frac{ex_m}2-\sum_{i=1}^{g}|E(C_i)|\\
&\geq m(n+1)-\frac{ex_m}2-\sum_{i=1}^{g}\frac{ex_{m_i}}2
 \end{split}
 \end{equation*}

By Lemma $11$, we have  $|F|\geq m(n+1)-\frac{ex_m}2-\sum_{i=1}^{g}\frac{ex_{m_i}}2\geq m(n+1)-\frac{ex_m}2-\frac{ex_{m-g+1}}2$. We next show that  $$m(n+1)-\frac{ex_m}2-\frac{ex_{m-g+1}}2\geq (n+1)g-\frac{ex_g}2~~~~~~~~~~~~~~~~.$$

For any $g\leq 2^{\frac{(n+1)}2}$ and $S=\{v_1,v_2,\cdots,v_g\}$, $|\cup_{i=1}^g E_{v_i}|\geq (n+1)g-\frac{ex_g}2$ holds as $|\cup_{i=1}^g E_{v_i}|\geq (n+1)g-2|E(FQ_n[S])|+|E(FQ_n[S])|=(n+1)g-|E(FQ_n[S])|\geq (n+1)g-\frac{ex_g}2$.

Let $FQ_n=D_0 \bigotimes D_1$. Since $m<2^{n-2}$, we may pick a subgraph $G_1$ of $m$ vertices in $D_0$, where $G_1$ is the subgraph of $FQ_n$ induced by $\{0,1,2,\cdots, m-1\}$. So $|E(G_1)|=\frac{ex_m}2$. Since $m-g+1<m$, we may pick a subgraph $G_2\subset G_1$ such that $|E(G_2)|=\frac{ex_{m-g+1}}2$ (here we pick the subgraph $G_2$ has the same structural property as $G_1$). Label the vertices in $G_2$ by $v_m,v_{m-1},\cdots,v_{g+1},v_{g}$ such that $d_{G_2}(v_g)=s+1$, where $m-g=2^{t_0}+2^{t_1}+\cdots+2^{t_s}$, and label the vertices in $V(G_1-G_2)$ by $v_1,v_2,\cdots,v_{g-1}$. See Figure 1 (a).

\begin{center}
\scalebox{0.9}{\includegraphics{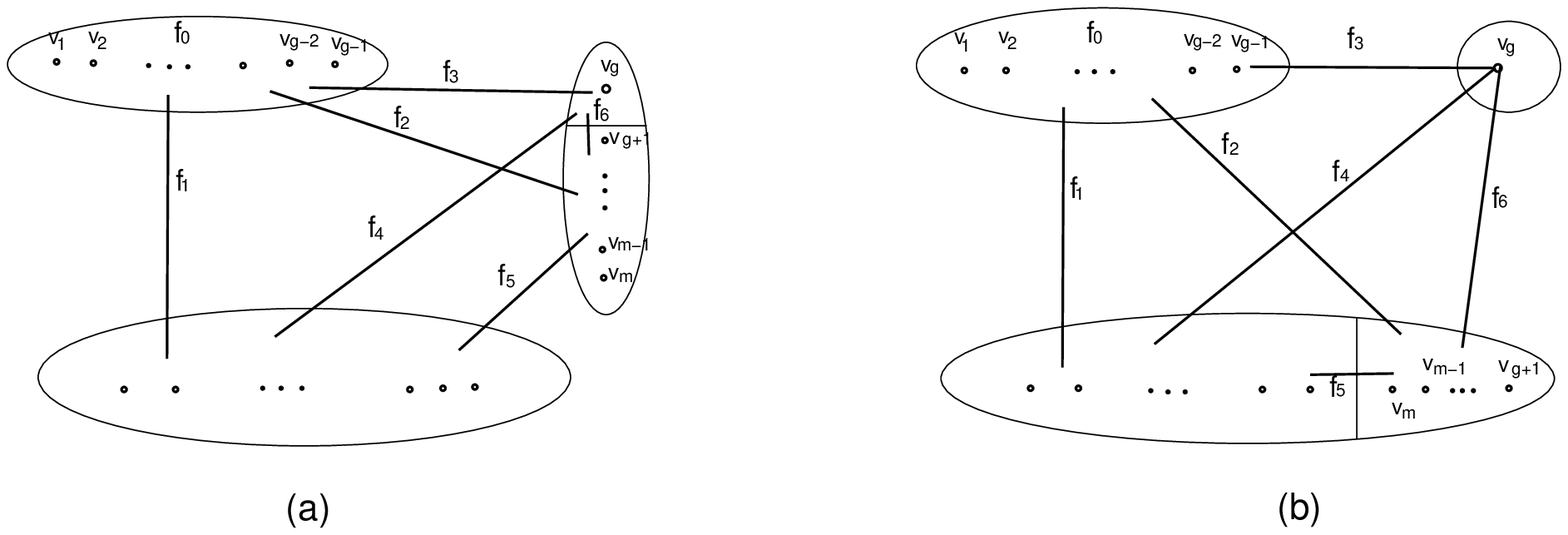}}
Figure 1. The edges between the components.
\end{center}

Let $S=\{v_1,\cdots,v_{g-1}\}$ and $f_0=|E(FQ_n[S])|$. Clearly, $f_0+f_1+f_2+f_3+f_4+f_5=m(n+1)-2|E(G_1)|+(|E(G_1)|-|E(G_2)|) =m(n+1)-\frac{ex_m}2-\frac{ex_{m-g+1}}2$. See Figure 1 (b). By Claim 5, $f_0+f_1+f_2+f_3+f_4+f_6\geq (n+1)g-\frac{ex_g}2$. Thus if $f_5\geq f_6$, then $m(n+1)-\frac{ex_m}2-\frac{ex_{m-g+1}}2=f_0+f_1+f_2+f_3+f_4+f_5\geq f_0+f_1+f_2+f_3+f_4+f_6\geq  (n+1)g-\frac{ex_g}2$. Note that $G_1\subset D_0$, then there  $f_5\geq m-g\geq s+1=f_6$

We omit the argument for even $n$.

Thus, $|F|\geq (n+1)g-\frac{ex_g}2$ and then $c\lambda_{g+1}(FQ_n)=(n+1)g-\frac{ex_g}2$.
\end{proof}

\begin{cor}
Let $F$ be a $c\lambda_{g+1}$-cut of the hypercube $FQ_n$. Then $FQ_n-F$ contains $g$ isolated vertices for $g\leq 2^{[\frac{(n+1)}2]}, n\geq 5$.
\end{cor}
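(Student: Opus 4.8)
The plan is to reopen the lower-bound computation of Theorem 12 and record exactly when each of its inequalities is tight. Let $F$ be a $c\lambda_{g+1}$-cut, so by Theorem 12 we have $|F| = (n+1)g - \frac{ex_g}{2}$, and let $C_1,\dots,C_{g+1}$ be the components of $FQ_n - F$ with $C_{g+1}$ the largest. Writing $m_i = |V(C_i)|$ and $m = \sum_{i=1}^{g} m_i \ge g$, the assertion that $FQ_n - F$ contains $g$ isolated vertices is precisely the claim $m = g$, i.e. each $C_i$ with $i \le g$ is a single vertex. I would argue by contradiction, assuming $m > g$, and derive $|F| > (n+1)g - \frac{ex_g}{2}$.

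First I would exclude Case 1 of Theorem 12, where $|V(C_{g+1})| < 2^{n-2}$. There the theorem already yields $|F| \ge \frac{3(n+1)}{4}2^{\frac{n+1}{2}}$. Since $\xi(m) = (n+1)m - \frac{ex_m}{2}$ is strictly increasing for $m \le 2^{[\frac{n+1}{2}]}$ by Lemma 9, and a direct evaluation gives $\xi\left(2^{[\frac{n+1}{2}]}\right) = \frac{3(n+1)}{4}2^{\frac{n+1}{2}}$, for $g < 2^{[\frac{n+1}{2}]}$ we obtain $|F| \ge \frac{3(n+1)}{4}2^{\frac{n+1}{2}} > \xi(g) = (n+1)g - \frac{ex_g}{2}$, contradicting optimality; hence an optimal $F$ lies in Case 2. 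At the extreme value $g = 2^{[\frac{n+1}{2}]}$ the value bound is tight, and I would instead observe that the intermediate estimates of Case 1 (discarding the nonnegative quantity $(n-1)r' - ex_{r'}$ and using $(n+1-t_0)2^{t_0-\frac{n+1}{2}} \ge \frac{3(n+1)}{4}$) carry strict slack, so Case 1 stays non-optimal.

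The core step is the strictness analysis in Case 2. The proof of Theorem 12 supplies $|F| \ge h(m)$, where $h(m) = (n+1)m - \frac{ex_m}{2} - \frac{ex_{m-g+1}}{2}$, and, through the decomposition $h(m) = f_0 + \cdots + f_5$, Claim 5, and the inequality $f_5 \ge m-g \ge s+1 = f_6$ (with $m-g = \sum_{i=0}^{s} 2^{t_i}$), it gives $h(m) \ge (n+1)g - \frac{ex_g}{2}$. I would sharpen the final step: a positive integer strictly exceeds the number of terms of its binary decomposition unless it equals $0$ or $1$, so $m-g \ge 2$ forces $f_5 \ge m-g > s+1 = f_6$; then $h(m) \ge (n+1)g - \frac{ex_g}{2} + (f_5 - f_6) > (n+1)g - \frac{ex_g}{2}$, whence $|F| > (n+1)g - \frac{ex_g}{2}$ for every $m \ge g+2$, a contradiction.

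The residual subcase $m = g+1$ escapes this slack, since there $f_5 \ge 1 = f_6$ only, so I would treat it by a direct computation. Using $ex_{k+1} - ex_k = 2(s_k+1)$ from the proof of Lemma 11 together with $ex_1 = 0$ and $ex_2 = 2$, one finds $h(g+1) - h(g) = n - (s_g+1)$, where $s_g+1$ is the number of terms in the binary decomposition of $g$. Since $g \le 2^{[\frac{n+1}{2}]}$ forces $s_g+1 \le [\frac{n+1}{2}]$, this difference is at least $\frac{n-1}{2} > 0$ for odd $n$, so $|F| \ge h(g+1) > (n+1)g - \frac{ex_g}{2}$, again contradicting optimality; the even case is analogous. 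Therefore $m = g$, and the corollary follows. The main obstacle is exactly this $m = g+1$ boundary, where the geometric slack $f_5 - f_6$ collapses and strictness must be recovered from the arithmetic of the increments $ex_{k+1} - ex_k$ and from the hypothesis $g \le 2^{[\frac{n+1}{2}]}$; a secondary point is confirming the non-optimality of Case 1 at the single extreme value $g = 2^{[\frac{n+1}{2}]}$, where the value bound alone is tight.
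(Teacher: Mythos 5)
The paper offers no actual proof of this corollary (its ``proof'' is the single sentence ``The result holds easily''), so yours is the only real argument on the table, and its overall plan is the right one: rerun the lower bound of Theorem 12 and show every inequality becomes strict once $m=\sum_{i=1}^{g}|V(C_i)|>g$. Your Case 2 analysis checks out completely: for $m-g\ge 2$ the count of binary terms satisfies $s+1<m-g$, so $f_5>f_6$ gives strictness, and for the boundary subcase $m=g+1$ your increment computation $h(g+1)-h(g)=n-(s_g+1)\ge \frac{n-1}{2}>0$ (using $ex_{k+1}=ex_k+2(s_k+1)$, $ex_2=2$, and $s_g+1\le[\frac{n+1}{2}]$) is correct and is genuinely needed, since the theorem's own inequality $f_5\ge m-g\ge s+1=f_6$ has no slack there.

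However, your exclusion of Case 1 has a genuine gap at the extreme value $g=2^{[\frac{n+1}{2}]}$. You assert that the intermediate estimates of Case 1 ``carry strict slack,'' but for $n=7$, $g=2^4=16$ they do not: taking $|X|=m=2^{n-2}=32$ gives $t_0=5$ and $r'=0$, so the discarded quantity $(n-1)r'-ex_{r'}$ equals exactly $0$, and $(n+1-t_0)2^{t_0-\frac{n+1}{2}}=3\cdot 2=6=\frac{3(n+1)}{4}$ is also an equality; numerically $(n+1)m-ex_m=8\cdot 32-160=96=(n+1)g-\frac{ex_g}{2}$. So at $(n,g)=(7,16)$ your argument leaves open precisely what the corollary must rule out: a minimum cut of Case-1 type, which would isolate no vertices at all. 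The gap is repairable, but by a different mechanism than slack in the displayed chain: equality $|F|=(n+1)m-ex_m$ forces $F=E_X$ with $FQ_n[X]$ having $\frac{ex_m}{2}$ edges; such a subgraph is connected (if it split into $r\ge2$ parts, Lemma 11 and the strict monotonicity of $ex$ would give at most $\frac{ex_{m-r+1}}{2}<\frac{ex_m}{2}$ edges), yet in Case 1 the set $X$ is a union of at least two components of $FQ_n-F$, so $F$ would have to contain an edge inside $X$ as well, contradicting $F=E_X$. A secondary point: your appeal to Case 1 of Theorem 12 inherits its hidden restriction $n\ge 7$ --- for $n=5$ the inequality $(n+1-t_0)2^{t_0-\frac{n+1}{2}}\ge\frac{3(n+1)}{4}$ fails for both $t_0=3$ and $t_0=4$ --- so the stated range $n\ge5$ is not covered; that defect originates in the paper's proof of the theorem, but your proof of the corollary silently relies on it.
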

\begin{proof}
The result holds easily.

\end{proof}

\section{Conclusion}

In this paper, we studied the component edge connectivity of the folded hypercube. The component (edge) connectivity is a generalization of standard (edge) connectivity of graphs, see \cite{chartrand,sampathkumar},  which can be viewed as a  measure of robustness of interconnection networks. The standard connectivity of folded hypercubes (or classic networks) have been studied by many authors, but there are few papers on the component (edge) connectivity of networks.  This paper introduced an idea to consider the  $g$-component edge connectivity of the folded hypercube (or cube-based networks), but our result is not complete. The problem is still open for $g> 2^{[\frac{(n+1)}2]}+1$.

\section{Acknowledgements}


The research is supported by NSFC (No.11671296, 11301217, 11301085), SRF for ROCS, SEM and Natural Sciences
Foundation of Shanxi Province (No. 2014021010-2), Fund Program for the Scientific Activities of Selected
Returned Overseas Professionals in Shanxi Province

\begin{rem}
The work was included in the MS thesis of the first author in [On the component connectiviy of hypercubes and folded hypercubes, MS Thesis at Taiyuan University of
Technology, 2017].
\end{rem}

\end{document}